%
%
%

\documentclass[graybox]{svmult}


\usepackage{type1cm}        
%
\usepackage{makeidx}         
\usepackage{graphicx}        
\usepackage{multicol}        
\usepackage[bottom]{footmisc}

\usepackage{newtxtext}       %
\usepackage[varvw]{newtxmath}       


\usepackage{mathrsfs}
\newcommand{\N}{\mathbb{N}}
\newcommand{\Z}{\mathbb{Z}}
\newcommand{\R}{\mathbb{R}}

\makeindex             


\theoremstyle{definition}
\newtheorem{exampl}{Example}

\begin{document}

\title*{A note on composition operators between weighted spaces of smooth functions}
\author{Andreas Debrouwere and Lenny Neyt}
\institute{Andreas Debrouwere \at Department of Mathematics and Data Science, Vrije Universiteit Brussel, Pleinlaan 2, 1050 \\ Brussels, Belgium, \email{andreas.debrouwere@vub.be}
\and Lenny Neyt \at Department of Mathematics: Analysis, Logic and Discrete Mathematics, Ghent University, \\ Krijgslaan 281, 9000 Ghent, Belgium, \email{lenny.neyt@UGent.be}}
%
%


\maketitle

\abstract{For certain weighted locally convex spaces $X$ and $Y$ of one real variable smooth functions, we  characterize the smooth functions $\varphi: \R \to \R$ for which the composition operator $C_\varphi: X \to Y, \, f \mapsto f \circ \varphi$ is well-defined and continuous. 
This  problem has been recently considered for $X = Y$ being the space $\mathscr{S}$ of rapidly decreasing smooth functions \cite{GJ} and the space $\mathscr{O}_M$ of slowly increasing smooth functions \cite{AJM}. In particular, we recover both these results as well as obtain a  characterization  for $X =Y$ being the space $\mathscr{O}_C$ of very slowly increasing smooth functions. 
}

\section{Introduction}
One of the most fundamental questions in the study of  composition operators is to characterize when such an operator is well-defined and continuous in terms of its symbol. The goal of this article is to consider this question for weighted locally convex spaces of one real variable smooth functions. 

Let $\varphi: \R \to \R$ be smooth. In \cite{GJ} Galbis and Jord\'a showed that the composition operator $C_\varphi: \mathscr{S} \to \mathscr{S}, \, f \mapsto f \circ \varphi$, with $\mathscr{S}$ the space of rapidly decreasing smooth functions \cite{Schwartz},  is well-defined (continuous) if and only if
$$
\exists N \in \Z_+~:~ \sup_{x \in \R} \frac{1+ |x|}{(1+|\varphi(x)|)^N} < \infty
$$
and
$$
 \forall p\in \Z_+~\exists N \in \N~:~ \sup_{x \in \R} \frac{|\varphi^{(p)}(x)|}{(1+|\varphi(x)|)^{N}} < \infty.
$$
Albanese et.\ al  \cite{AJM} proved that the composition operator $C_\varphi: \mathscr{O}_M \to \mathscr{O}_M$, with $\mathscr{O}_M$ the space of slowly increasing smooth functions \cite{Schwartz},  is well-defined (continuous) if and only if
$\varphi \in \mathscr{O}_M$. In \cite[Remark 2.6]{AJM} they also pointed out that the corresponding result for the space  $\mathscr{O}_C$ of very slowly increasing smooth functions \cite{Schwartz} is false, namely, they showed that $\sin (x^2) \notin  \mathscr{O}_C$, while, obviously, $\sin x, x^2 \in \mathscr{O}_C$. 

Inspired by these results, we study in this article  the following general question: Given two weighted locally convex spaces $X$ and $Y$ of smooth functions,  when is the composition operator $C_\varphi: X \to Y$  well-defined (continuous)? We shall consider this problem for $X$ and $Y$ both being Fr\'echet spaces, $(LF)$-spaces, or  $(PLB)$-spaces.  

We now state a particular instance of our main result that covers many well-known spaces. We need some preparation. 
Given a positive continuous function $v$ on $\R$, we write $\mathscr{B}^n_v$, $n \in \N$, for the Banach space consisting of all $f \in C^n(\R)$ such that
$$
\|f\|_{v,n} = \max_{p \leq n} \sup_{x \in \R} \frac{|f^{(p)}(x)|}{v(x)} < \infty.
$$
For $v \geq 1$ we consider the following three weighted spaces of smooth functions
\begin{align*}
\mathscr{K}_{v} &= \varprojlim_{N \in \N} \mathscr{B}^N_{1/v^N} , \\
\mathscr{O}_{C,v} &= \varinjlim_{N \in \N} \varprojlim_{n \in \N} \mathscr{B}^n_{v^N} ,\\
\mathscr{O}_{M,v} &=  \varprojlim_{n \in \N} \varinjlim_{N \in \N}  \mathscr{B}^n_{v^N} .
\end{align*}
Theorem \ref{main} below implies the following result:
 
 \begin{theorem} \label{intro}  Let $v,w: \R \to [1,\infty)$ be continuous functions such that 
$$
\sup_{x,t \in \R,|t| \leq 1} \frac{v(x+t)}{v^\lambda(x)} < \infty \qquad \mbox{and} \qquad \sup_{x,t \in \R,|t| \leq 1} \frac{w(x+t)}{w^\mu(x)} < \infty,
$$
 for some $\lambda,\mu > 0$. Let $\varphi: \R \to \R$ be smooth. Then,
\begin{enumerate}
\item[$(I)$]  The following statements are equivalent:
	\begin{enumerate}
	\item[$(i)$] $C_\varphi(\mathscr{K}_{v} ) \subseteq \mathscr{K}_{w}$.
	\item[$(ii)$] $C_\varphi: \mathscr{K}_{v}  \rightarrow \mathscr{K}_{w}$ is continuous.
	\item[$(iii)$] $\varphi$ satisfies the following two properties
		\begin{enumerate}
		\item [$(a)$] $\displaystyle \exists \lambda > 0~:~ \sup_{x \in \R} \frac{w(x)}{v^\lambda(\varphi(x))} < \infty$. 
		\item [$(b)$] $\displaystyle \forall p \in \Z_+~\exists \lambda > 0~:~ \sup_{x \in \R} \frac{|\varphi^{(p)}(x)|}{v^\lambda(\varphi(x))} < \infty$.
		\end{enumerate}
	\end{enumerate}
\item[$(II)$]  The following statements are equivalent:
	\begin{enumerate}
	\item[$(i)$] $C_\varphi(\mathscr{O}_{C,v} ) \subseteq \mathscr{O}_{C,w}$.
	\item[$(ii)$] $C_\varphi: \mathscr{O}_{C,v}  \rightarrow \mathscr{O}_{C,w}$ is continuous.
	\item[$(iii)$] $\varphi$ satisfies the following two properties
		\begin{enumerate}
		\item [$(a)$] $\displaystyle  \exists \mu > 0~:~ \sup_{x \in \R} \frac{v(\varphi(x))}{w^\mu(x)} < \infty$. 
		\item [$(b)$] $\displaystyle \forall p,k \in \Z_+~:~ \sup_{x \in \R} \frac{|\varphi^{(p)}(x)|}{w^{1/k}(x)} < \infty$.
		\end{enumerate}
	\end{enumerate}

\item[$(III)$]  The following statements are equivalent:
	\begin{enumerate}
	\item[$(i)$] $C_\varphi(\mathscr{O}_{M,v} ) \subseteq \mathscr{O}_{M,w}$.
	\item[$(ii)$] $C_\varphi: \mathscr{O}_{M,v}  \rightarrow \mathscr{O}_{M,w}$ is continuous.
	\item[$(iii)$] $\varphi$ satisfies the following two properties
		\begin{enumerate}
		\item [$(a)$] $\displaystyle  \exists \mu > 0~:~ \sup_{x \in \R} \frac{v(\varphi(x))}{w^\mu(x)} < \infty$. 
		\item [$(b)$] $\displaystyle \forall p \in \Z_+~\exists \mu > 0~:~ \sup_{x \in \R} \frac{|\varphi^{(p)}(x)|}{w^\mu(x)} < \infty$.
		\end{enumerate}
	\end{enumerate}
\end{enumerate}
\end{theorem}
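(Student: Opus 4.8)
The intended proof of Theorem~\ref{intro} is to deduce it from the general Theorem~\ref{main}, by checking that the weight systems defining $\mathscr{K}_v$, $\mathscr{O}_{C,v}$ and $\mathscr{O}_{M,v}$ satisfy its standing assumptions and that its abstract characterisation specialises to $(iii)$ in each of the three cases. I sketch instead the underlying direct argument, which treats the three pairs of spaces in parallel, as they differ only in the positions of the inductive and projective limits; in each case one proves $(iii)\Rightarrow(ii)\Rightarrow(i)\Rightarrow(iii)$, the middle implication being trivial. The computational engine is the Fa\`a di Bruno formula
\[
(f\circ\varphi)^{(p)}(x)=\sum_{k_1+2k_2+\cdots+pk_p=p}\frac{p!}{k_1!\cdots k_p!}\,f^{(k_1+\cdots+k_p)}(\varphi(x))\prod_{j=1}^{p}\Bigl(\frac{\varphi^{(j)}(x)}{j!}\Bigr)^{k_j},
\]
in which the total number of $\varphi$-derivative factors in a term equals the order $\sum_j k_j\le p$ of the $f$-derivative. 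A second ingredient is a smoothing of the weights: convolving $v$ with a fixed bump and using the standing hypothesis $v(x+t)\le Cv^{\lambda}(x)$ for $|t|\le1$ yields a smooth $\widetilde v$ with $\widetilde v\ge c\,v^{1/\lambda}$ and $|\widetilde v^{(p)}|\le C_p\,v^{\lambda}$ for all $p$, so that $\widetilde v\in\mathscr{O}_{C,v}\subseteq\mathscr{O}_{M,v}$; in the $\mathscr{K}$-case one uses instead localised test functions $\chi(\,\cdot-\varphi(x_0))$ renormalised by a power of $v(\varphi(x_0))$ adjusted to the source seminorm at hand, whose $\mathscr{B}^{N}_{1/v^{N}}$-norm is bounded uniformly in $x_0$ by the same weight hypothesis.

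For $(iii)\Rightarrow(ii)$ I would, given $f$ in the source space, bound each Banach-step seminorm of $f\circ\varphi$ in the target by inserting the available estimates into Fa\`a di Bruno. The bound on $f^{(m)}$ produces the weight $v$ evaluated at $\varphi(x)$, which condition $(a)$ trades for the appropriate power of $w$ evaluated at $x$; the same condition handles the $p=0$ term $f(\varphi(x))$. Condition $(b)$ bounds each factor $\varphi^{(j)}(x)$, and the $\varphi$-factors contribute to the $w$-exponent at most $\sum_j k_j\le p$ times the exponent supplied by $(b)$. For the Fr\'echet space $\mathscr{K}_w$ and the $(PLB)$-space $\mathscr{O}_{M,w}$ this growth in $p$ is harmless, because the target is a projective limit in the derivative index and $(b)$ there allows the exponent to depend on $p$. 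For $\mathscr{O}_{C,w}=\varinjlim_N\varprojlim_n\mathscr{B}^n_{w^N}$ the exponent must be independent of the derivative order, and this is exactly why $(b)$ is stated with ``$\forall k$'': choosing $k=n$ makes the $\varphi$-factors contribute an exponent at most $p/n\le1$, uniformly for $p\le n$, so that the total exponent stays bounded as $n\to\infty$. In all three cases the constants depend on $f$ through a single source seminorm, which yields continuity; packaging these estimates as continuity of the relevant map between $(PLB)$-, $(LF)$- or Fr\'echet spaces is then routine via the universal properties of projective and inductive limits.

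The implication $(ii)\Rightarrow(i)$ is immediate. For $(i)\Rightarrow(iii)$ I would, in the $\mathscr{K}$-case, first upgrade the inclusion to continuity by the closed graph theorem: $\mathscr{K}_v$ and $\mathscr{K}_w$ are Fr\'echet, and the graph of $C_\varphi$ is closed because convergence in either space forces locally uniform convergence, hence pointwise convergence, and the pointwise limit of $f_j\circ\varphi$ is $f\circ\varphi$. Necessity of $(a)$ then follows by evaluating a target seminorm at a single point $x_0$ on the renormalised bumps above, and, in the $\mathscr{O}_C$- and $\mathscr{O}_M$-cases, by composing $\widetilde v$ with $\varphi$ and reading off the zeroth-derivative estimate $|\widetilde v(\varphi(x))|\le C\,w^{N}(x)$ together with $\widetilde v\ge c\,v^{1/\lambda}$---here one uses only the inclusion, so that no closed graph theorem for the $(PLB)$-space $\mathscr{O}_{M,v}$ is needed. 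Necessity of $(b)$ follows by testing against functions that equal $y-\varphi(x_0)$ near $\varphi(x_0)$: all their derivatives of order $\ge2$ vanish at $\varphi(x_0)$, only the term with $k_p=1$ survives in Fa\`a di Bruno, so $(f\circ\varphi)^{(p)}(x_0)$ equals $\varphi^{(p)}(x_0)$ up to the chosen normalisation, and inserting this into the $p$-th-derivative part of a target seminorm, with $x_0$ varying, yields the bound; in the $\mathscr{O}_M$-case even $f(y)=y$ is admissible and gives $\varphi=C_\varphi(\mathrm{id})\in\mathscr{O}_{M,w}$ directly.

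The genuine obstacle is the necessity of $(b)$ in the $\mathscr{O}_C$-case: the bounded test functions just described only yield ``$\exists\mu:\sup_x|\varphi^{(p)}(x)|/w^{\mu}(x)<\infty$'', which is strictly weaker than the claimed ``$\forall k:\sup_x|\varphi^{(p)}(x)|/w^{1/k}(x)<\infty$'' (witness $\varphi(x)=x^2$, $w(x)=1+|x|$, for which $C_\varphi$ is \emph{not} well defined on $\mathscr{O}_C$). To obtain the stronger statement I would argue by contradiction using a gliding-hump construction that exploits \emph{high}-order derivatives of the composition. If $\sup_x|\varphi^{(p_0)}(x)|/w^{1/k_0}(x)=\infty$, choose $x_j$ with $|\varphi^{(p_0)}(x_j)|\ge j\,w^{1/k_0}(x_j)$ and build $f\in\mathscr{O}_{C,v}$ which, near each $\varphi(x_j)$, behaves like a function whose $j$-th derivative there is bounded away from $0$ while its lower derivatives vanish; the term with $k_{p_0}=j$ in Fa\`a di Bruno then makes $|(f\circ\varphi)^{(jp_0)}(x_j)|$ at least of order $j^{j}\,w^{j/k_0}(x_j)$, which for every fixed $N$ eventually dominates $w^{N}(x_j)$ (take $j>Nk_0$). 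Hence $f\circ\varphi$ escapes every $\mathscr{B}^n_{w^N}$, so $f\circ\varphi\notin\mathscr{O}_{C,w}$, contradicting $(i)$. The delicate parts are making the construction work when the images $\varphi(x_j)$ accumulate---after passing to a subsequence one localises near a single limit point and uses a fixed model function, e.g.\ a cutoff of $\cosh$---and verifying that the distinguished Fa\`a di Bruno term is not swamped by the competing ones. This combinatorial bookkeeping, along with the correct handling of the $(PLB)$- and $(LF)$-topologies, is where the real work lies.
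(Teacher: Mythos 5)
Your overall architecture (closed graph / factorization to upgrade well-definedness to continuity, Fa\`a di Bruno with localised test functions for necessity, Fa\`a di Bruno again for sufficiency) matches the paper's proof of Theorem~\ref{main}, and your smoothed-weight trick for the necessity of $(a)$ in cases $(II)$ and $(III)$ is a legitimate variant of the paper's translated bumps $f(\,\cdot\,-\varphi(x))$. You also correctly isolate the one genuinely hard point, namely the necessity of the ``$\forall k$'' condition $(II)(iii)(b)$. But precisely there your proposed argument does not work as described, and the paper's actual mechanism is different and much cleaner.

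The paper does \emph{not} attack high-order derivatives of the composition. It extracts two separate pieces of information from $\|C_\varphi(f_x)\|_{w,p}\leq C\|f_x\|_{\widetilde v,n}$ by choosing test functions whose jet at $0$ kills all but one Fa\`a di Bruno term: taking $f^{(j)}(0)=0$ for $j<p$, $f^{(p)}(0)=1$ isolates $(\varphi'(x))^p$ and yields $\sup_x|\varphi'(x)|^{p}/w(x)<\infty$ for every $p$, i.e.\ $|\varphi'|\lesssim w^{1/m}$ for every $m$; taking $f'(0)=1$, $f^{(j)}(0)=0$ for $2\leq j\leq p$ isolates $\varphi^{(p)}(x)$ and yields $|\varphi^{(p)}|\lesssim w$. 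It then interpolates between these two bounds using Gorny's Landau--Kolmogorov-type inequality \eqref{Gorny} applied to $g=\varphi'(x+\,\cdot\,)$ on $[-1,1]$, which gives $|\varphi^{(p)}(x)|\lesssim w_M^{1/k}(x)$ for every $k$. This interpolation step is the idea your proposal is missing.

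Your substitute, the gliding-hump contradiction, has two concrete defects. First, with $f$ locally of the form $c_j(y-\varphi(x_j))^{j}$ every Fa\`a di Bruno term of $(f\circ\varphi)^{(jp_0)}$ with $\sum_i k_i=j$ survives, not just the one with $k_{p_0}=j$; these competing terms carry products of other derivatives of $\varphi$ with arbitrary signs, so the distinguished term can be cancelled or swamped, and nothing in the hypotheses controls this. Second, and more fundamentally, producing one point $x_j$ at which the derivative of order $jp_0$ is large does not show $f\circ\varphi\notin\mathscr{O}_{C,w}$: to escape $\varprojlim_n\mathscr{B}^n_{w^N}$ for a given $N$ you must exhibit a \emph{fixed} derivative order $n$ with $\sup_x|(f\circ\varphi)^{(n)}(x)|/w^N(x)=\infty$, whereas your construction gives a single finite value at derivative order $jp_0\to\infty$. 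Repairing this would require infinitely many bad points per fixed order together with control of the competing terms, i.e.\ essentially redoing the work that Gorny's inequality does for free. A minor additional slip: your remark that $f(y)=y$ is admissible in the $\mathscr{O}_M$ case fails when $v$ is bounded (e.g.\ $v=w=1$, the space $\mathscr{B}$), so the necessity of $(III)(iii)(b)$ should rest on the localised test functions only, as in the paper.
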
 
By setting $v(x) = w(x) = 1+|x|$ in Theorem \ref{intro} we recover the above results about $\mathscr{S}$ and $\mathscr{O}_M$ from \cite{GJ, AJM} as well as the following characterization for the space $\mathscr{O}_C$ of very slowly increasing smooth functions: $C_\varphi: \mathscr{O}_C \to \mathscr{O}_C$ is well defined (continuous) if and only if 
$$
\exists N \in \N~:~ \sup_{x \in \R} \frac{|\varphi(x)|}{(1+|x|)^N} < \infty\qquad \mbox{and} \qquad \forall p,k \in \Z_+~:~ \sup_{x \in \R} \frac{|\varphi^{(p)}(x)|}{(1+|x|)^{1/k}} < \infty.
$$
For $v = w = 1$, Theorem \ref{intro} gives the following result for the Fr\'echet space $\mathscr{B}$ of smooth functions that are bounded together will all their derivatives \cite{Schwartz}: $C_\varphi: \mathscr{B} \to \mathscr{B}$ is well defined (continuous) if and only if $\varphi' \in \mathscr{B}$. Another interesting choice is  $v(x) = w(x) = e^{|x|}$, for which Theorem \ref{main} characterizes composition operators on spaces of exponentially decreasing/increasing smooth functions \cite{hasumi, zielezny}. We leave it to the  reader to explicitly formulate this and other examples.

\section{Statement of the main result}

A pointwise non-decreasing sequence $V = (v_{N})_{N \in \N}$ of positive continuous functions on $\R$ is called a \emph{weight system} if $v_0 \geq 1$  and 
$$
\forall N~\exists M \geq N~:~ \sup_{x,t \in \R,|t| \leq 1} \frac{v_N(x+t)}{v_M(x)} < \infty.
$$
We shall also make use of the following condition on a weight system $V = (v_{N})_{N \in \N}$: 
\begin{equation}
 \forall N,M~ \exists K \geq N,M~:~ \sup_{x \in \R} \frac{v_N(x)v_M(x)}{v_K(x)} < \infty.
\label{cond-mult}
\end{equation}
\begin{exampl} \label{example}
Let $v: \R \to [1,\infty)$ be a continuous function satisfying
$$
\sup_{x,t \in \R,|t| \leq 1} \frac{v(x+t)}{v^N(x)} < \infty.
$$
for some $N \in \N$ (cf.\   Theorem \ref{intro}). Then,
$$V_v = (v^N)_{N \in \N}$$ is a weight system satisfying \eqref{cond-mult}.
\end{exampl}

Recall that for a positive continuous function $v$ on $\R$ and $n \in \N$, we write  $\mathscr{B}^n_v$  for the Banach space consisting of all $f \in C^n(\R)$ such that
$$
\|f\|_{v,n} = \max_{p \leq n} \sup_{x \in \R} \frac{|f^{(p)}(x)|}{v(x)} < \infty.
$$
Let $V = (v_{N})_{N \in \N}$ be a weight system. We shall be concerned with the  following weighted spaces of smooth functions
\begin{align*}
\mathscr{K}_{V} &= \varprojlim_{N \in \N} \mathscr{B}^N_{1 / v_N} , \\
\mathscr{O}_{C,V} &= \varinjlim_{N \in \N} \varprojlim_{n \in \N} \mathscr{B}^n_{v_N} , \\
\mathscr{O}_{M,V} &=  \varprojlim_{n \in \N} \varinjlim_{N \in \N}  \mathscr{B}^n_{v_N} .
\end{align*}
Note that $\mathscr{K}_{V}$ is a Fr\'echet space, $\mathscr{O}_{C,V}$ is an $(LF)$-space, and $\mathscr{O}_{M,V}$ is a $(PLB)$-space. Furthermore, we have the following continuous inclusions
$$
\mathscr{D}(\R) \subset \mathscr{K}_{V}  \subset \mathscr{O}_{C,V} \subset \mathscr{O}_{M,V} \subset C^\infty(\R),
$$
where $\mathscr{D}(\R)$ denotes the space of compactly supported smooth functions.  The spaces $\mathscr{K}_V$ were introduced and studied by Gelfand and Shilov \cite{GS}, while we refer to \cite{DV} for more information on the spaces $\mathscr{O}_{C,V}$. For $N,n \in \N$ fixed we will also need the following spaces
$$
\mathscr{B}_{v_N} = \varprojlim_{n \in \N} \mathscr{B}^n_{v_N}, \qquad \mathscr{O}^n_{M,V} =  \varinjlim_{N \in \N}  \mathscr{B}^n_{v_N}. 
$$
The goal of this article is to show the following result.
 
\begin{theorem} \label{main}
Let $V = (v_{N})_{N \in \N}$ and $W = (w_{M})_{M \in \N}$ be two weight systems and let $\varphi: \R \to \R$ be smooth.

\begin{enumerate}
\item[$(I)$]  Suppose that $V$ satisfies \eqref{cond-mult}. The following statements are equivalent:
	\begin{enumerate}
	\item[$(i)$] $C_\varphi(\mathscr{K}_{V} ) \subseteq \mathscr{K}_{W}$.
	\item[$(ii)$] $C_\varphi: \mathscr{K}_{V}  \rightarrow \mathscr{K}_{W}$ is continuous.
	\item[$(iii)$] $\varphi$ satisfies the following two properties
		\begin{enumerate}
		\item [$(a)$] $\displaystyle \forall M~\exists N~:~ \sup_{x \in \R} \frac{w_M(x)}{v_N(\varphi(x))} < \infty$. 
		\item [$(b)$] $\displaystyle \forall p \in \Z_+~\exists N~:~ \sup_{x \in \R} \frac{|\varphi^{(p)}(x)|}{v_N(\varphi(x))} < \infty$.
		\end{enumerate}
	\end{enumerate}

\item[$(II)$]  Suppose that $W$ satisfies \eqref{cond-mult}. The following statements are equivalent:
	\begin{enumerate}
	\item[$(i)$] $C_\varphi(\mathscr{O}_{C,V} ) \subseteq \mathscr{O}_{C,W}$.
	\item[$(ii)$] $C_\varphi: \mathscr{O}_{C,V}  \rightarrow \mathscr{O}_{C,W}$ is continuous.
	\item[$(iii)$] $\displaystyle \forall N~\exists M$ such that $C_\varphi: \mathscr{B}_{v_N}  \rightarrow \mathscr{B}_{w_M}$ is continuous.
	\item[$(iv)$] $\varphi$ satisfies the following two properties
		\begin{enumerate}
		\item [$(a)$] $\displaystyle \forall N~ \exists M~:~ \sup_{x \in \R} \frac{v_N(\varphi(x))}{w_M(x)} < \infty$. 
		\item [$(b)$] $\displaystyle\exists M~\forall p,k \in \Z_+~:~ \sup_{x \in \R} \frac{|\varphi^{(p)}(x)|}{w^{1/k}_M(x)} < \infty$.
		\end{enumerate}
	\end{enumerate}

\item[$(III)$]  Suppose that $W$ satisfies \eqref{cond-mult}. The following statements are equivalent:
	\begin{enumerate}
	\item[$(i)$] $C_\varphi(\mathscr{O}_{M,V} ) \subseteq \mathscr{O}_{M,W}$.
	\item[$(ii)$] $C_\varphi: \mathscr{O}_{M,V}  \rightarrow \mathscr{O}_{M,W}$ is continuous.
	\item[$(iii)$]$C_\varphi: \mathscr{O}^n_{M,V}  \rightarrow \mathscr{O}^n_{M,W}$ is continuous for all $n \in \N$.
	\item[$(iv)$] $\varphi$ satisfies the following two properties
		\begin{enumerate}
		\item [$(a)$] $\displaystyle \forall N~ \exists M~:~ \sup_{x \in \R} \frac{v_N(\varphi(x))}{w_M(x)} < \infty$. 
		\item [$(b)$] $\displaystyle \forall p \in \Z_+~\exists M~:~ \sup_{x \in \R} \frac{|\varphi^{(p)}(x)|}{w_M(x)} < \infty$.
		\end{enumerate}
	\end{enumerate}
\end{enumerate}
\end{theorem}
The proof of Theorem \ref{main} will be given in the next section. The spaces $\mathscr{K}_{v}$, $\mathscr{O}_{C,v}$ and $\mathscr{O}_{M,v}$ from the introduction can be written as 
$$
\mathscr{K}_{v} = \mathscr{K}_{V_v}, \qquad \mathscr{O}_{C,v} = \mathscr{O}_{C,V_v}, \qquad \mathscr{O}_{M,v} = \mathscr{O}_{M,V_v},
$$
where  $V_v = (v^N)_{N \in \N}$ is the weight system from Example \ref{example}. Hence, Theorem \ref{intro}  is a direct consequence of Theorem \ref{main} with $V = V_v$ and $W = V_w$.

\section{Proof of the main result}

Throughout this section we fix a smooth symbol $\varphi: \R \to \R$.  We need two lemmas in preparation for the proof of Theorem \ref{main}. For $n \in \N$ we set
$$
\| f \|_n = \| f \|_{1,n} = \ \max_{p \leq n} \sup_{x \in \R} |f^{(p)}(x)|.
$$
\begin{lemma}\label{lemma-1} Let $v, \widetilde{v},w$ be three positive continuous functions on $\R$ such that
$$
C_0 = \sup_{x,t \in \R,|t| \leq 1} \frac{v(x+t)}{\widetilde{v}(x)} < \infty.
$$
Let $p,n \in \N$ be such that 
\begin{equation}
\| C_\varphi(f) \|_{w,p} \leq C_1 \| f \|_{\widetilde{v},n}, \qquad \forall f \in \mathscr{D}(\R),
\label{norm-ineq}
\end{equation}
for some $C_1 > 0$. Then,
\begin{equation}
\sup_{x \in \R} \frac{v(\varphi(x))}{w(x)} < \infty ,
\label{ineq-0}
\end{equation}
and, if $p \geq 1$, also 
\begin{equation}
\sup_{x \in \R} \frac{v(\varphi(x))|\varphi'(x)|^{p}}{{w}(x)} < \infty ,
\label{ineq-1}
\end{equation}
and
\begin{equation}
\sup_{x \in \R} \frac{v(\varphi(x))|\varphi^{(p)}(x)|}{{w}(x)} < \infty.
\label{ineq-2}
\end{equation}
\end{lemma}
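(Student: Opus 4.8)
The plan is to test the norm inequality \eqref{norm-ineq} against a carefully chosen family of test functions and read off the three estimates. Fix $x_0 \in \R$ and set $y_0 = \varphi(x_0)$. The idea is to build, for each $y_0$, a function $f \in \mathscr{D}(\R)$ supported in the unit ball around $y_0$ whose $\mathscr{B}^n_{\widetilde v}$-norm is controlled purely by $C_0$ and $v(y_0)$ (via the one-step weight condition $C_0 = \sup_{|t|\le 1} v(x+t)/\widetilde v(x) < \infty$, which lets us replace $\widetilde v(x)$ in the denominator by $v(y_0)/C_0$ on that ball), while $C_\varphi(f) = f \circ \varphi$ has prescribed derivative behaviour at $x_0$. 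Concretely, take a fixed bump $\chi \in \mathscr{D}(\R)$ with $\chi \equiv 1$ near $0$ and $\mathrm{supp}\,\chi \subset (-1,1)$, and for the three estimates use the three test functions $f(y) = \chi(y - y_0)$, $f(y) = \chi(y-y_0)\,(y-y_0)^p/p!$ (or rather a scaled version), and again $f(y) = \chi(y - y_0)(y-y_0)^p/p!$, evaluating $(f\circ\varphi)^{(p)}(x_0)$ in the first two cases via the chain rule / Fa\`a di Bruno and in the last case isolating the top-order term.

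First I would prove \eqref{ineq-0}: with $f(y) = \chi(y-y_0)$ one has $\|f\|_{\widetilde v, n} \le C_0 \|\chi\|_n / v(y_0)$ (since $f$ is supported where $\widetilde v(x) \ge v(y_0)/C_0$... more precisely, where $\widetilde v$ at a point within distance $1$ of $y_0$ appears, so one uses $C_0$ in the form $v(y_0) \le C_0 \widetilde v(x)$ for $|x - y_0| \le 1$), and $|C_\varphi(f)(x_0)| = |f(y_0)| = 1$, so $w(x_0) \le \|C_\varphi(f)\|_{w,p} \le C_1 C_0 \|\chi\|_n / v(y_0)$, giving $v(y_0) w(x_0) \le C_1 C_0 \|\chi\|_n$; taking the supremum over $x_0$ yields \eqref{ineq-0}. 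For \eqref{ineq-1}, assume $p \ge 1$ and use $g(y) = \chi(y-y_0)(y-y_0)^p$: then $\|g\|_{\widetilde v,n} \le C_0 C' / v(y_0)$ for a constant $C'$ depending only on $\chi, p, n$, while $(g\circ\varphi)^{(p)}(x_0) = g^{(p)}(y_0)(\varphi'(x_0))^p + (\text{lower-order terms in } \varphi \text{ that all vanish at } x_0 \text{ because } g \text{ and its derivatives of order} < p \text{ vanish at } y_0) = p!\,(\varphi'(x_0))^p$, using Fa\`a di Bruno and the fact that every term other than the pure $(\varphi')^p$ term carries a factor $g^{(j)}(y_0)$ with $j < p$, which is $0$. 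Hence $w(x_0) \ge |(g\circ\varphi)^{(p)}(x_0)| / \dots$, more precisely $p! v(y_0)|\varphi'(x_0)|^p / w(x_0) \le \|C_\varphi(g)\|_{w,p} v(y_0) \le C_1 C_0 C'$, and \eqref{ineq-1} follows.

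For \eqref{ineq-2} I would again take $g(y) = \chi(y-y_0)(y-y_0)^p$ but now extract the top-order term of Fa\`a di Bruno's formula for $(g\circ\varphi)^{(p)}(x_0)$ the other way: among the partitions, the one giving a factor $\varphi^{(p)}(x_0)$ is the single block $\{1,\dots,p\}$, contributing $g'(y_0)\varphi^{(p)}(x_0)$; but $g'(y_0) = 0$ for $p \ge 2$, so instead one should take $g(y) = \chi(y-y_0)(y-y_0)$, i.e. the $p=1$-type function does not isolate $\varphi^{(p)}$ for higher $p$. The cleaner choice is $g(y)=\chi(y-y_0)(y-y_0)$ only works for \eqref{ineq-2} with the roles arranged so that $(g \circ \varphi)^{(p)}(x_0)$ picks up $g'(y_0)\varphi^{(p)}(x_0) = \varphi^{(p)}(x_0)$ plus terms involving $g^{(j)}(y_0)$ for $j \ge 2$, which vanish. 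This gives $v(y_0)|\varphi^{(p)}(x_0)|/w(x_0) \le C_1 C_0 \|\,y\chi(y)\,\|_n$ and hence \eqref{ineq-2}.

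The main obstacle I anticipate is the bookkeeping in the chain-rule expansion: one must verify in each case that the test function has been chosen so that \emph{exactly} the desired derivative of $\varphi$ survives at $x_0$ and all cross terms vanish, which requires matching the order of the zero of $g$ at $y_0$ to the combinatorial structure of Fa\`a di Bruno's formula; this is routine but needs care to get the correct test function for \eqref{ineq-2} (a linear rather than degree-$p$ vanishing factor). A secondary, purely technical point is to confirm that $\|\,\chi(\cdot - y_0)(\cdot - y_0)^j\,\|_{\widetilde v, n} \le C_0 \cdot \mathrm{const}(\chi,j,n)/v(y_0)$ uniformly in $y_0$, which is immediate from the support condition and $C_0 < \infty$, together with the observation that $\widetilde{v}$ is bounded below on the relevant unit ball by $v(y_0)/C_0$ — actually one only needs $v(y_0) \le C_0 \widetilde v(x)$ for all $x$ with $|x - y_0|\le 1$, which is exactly the hypothesis with the substitution $x \mapsto x$, $t \mapsto y_0 - x$.
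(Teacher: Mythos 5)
Your proposal is correct and takes essentially the same approach as the paper: translate a fixed bump to $\varphi(x_0)$, bound its $\widetilde{v}$-weighted norm by $C_0\cdot\mathrm{const}/v(\varphi(x_0))$ via the hypothesis on $C_0$, and choose the order of vanishing at $\varphi(x_0)$ so that Fa\`a di Bruno's formula isolates $(\varphi')^p$ (factor $(y-y_0)^p$) or $\varphi^{(p)}$ (factor $(y-y_0)$), exactly matching the paper's test functions with prescribed jets at $0$. The only blemish is a notational slip in your argument for \eqref{ineq-0}, where $w(x_0)$ appears in place of $1/w(x_0)$; the inequality that actually follows is $v(y_0)/w(x_0)\leq C_0C_1\|\chi\|_n$, which is what you need.
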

\begin{proof}
Given $f \in \mathscr{D}(\R)$ with $\operatorname{supp} f \subseteq [-1,1]$, we set $f_x = f(\, \cdot \, -\varphi(x))$ for $x \in \R$. Note that
\begin{equation}
 \| f_x \|_{\widetilde{v},n} \leq \frac{C_0\| f\|_n}{v(\varphi(x))}, \qquad x \in \R.
\label{norm-comp}
\end{equation}
We first show \eqref{ineq-0}. Choose $f \in \mathscr{D}(\R)$ with $\operatorname{supp} f \subseteq [-1,1]$ such that $f(0) = 1$. For all $x \in \R$ it holds that
$$
\| C_\varphi(f_x) \|_{w,p} \geq \frac{C_\varphi(f_x)(x)}{w(x)}  =  \frac{1}{w(x)}. 
$$
Hence, by  \eqref{norm-ineq} and \eqref{norm-comp}, we obtain that
$$
\frac{v(\varphi(x))}{{w}(x)} \leq C_0C_1 \| f\|_n, \qquad \forall x \in \R.
$$
Now assume that $p \geq 1$. We  prove \eqref{ineq-1}. Choose $f \in \mathscr{D}(\R)$ with $\operatorname{supp} f \subseteq [-1,1]$ such that $f^{(j)}(0) = 0$ for $j = 1, \ldots, p-1$ and $f^{(p)}(0)=1$. Fa\`a di Bruno's formula implies that for all $x \in \R$
$$
\| C_\varphi(f_x) \|_{w,p} \geq \frac{|C_\varphi(f_x)^{(p)}(x)|}{w(x)}  =  \frac{|\varphi'(x)|^p}{w(x)}. 
$$
Similarly as in the proof of \eqref{ineq-0}, the result now  follows from \eqref{norm-ineq} and \eqref{norm-comp}. Finally, we show \eqref{ineq-2}. Choose $f \in \mathscr{D}(\R)$ with $\operatorname{supp} f \subseteq [-1,1]$ such that $f'(0) = 1$ and $f^{(j)}(0)=0$ for $j = 2, \ldots, p$. Fa\`a di Bruno's formula implies that for all $x \in \R$ 
$$
\| C_\varphi(f_x) \|_{w,p} \geq \frac{|C_\varphi(f_x)^{(p)}(x)|}{w(x)}  =  \frac{|\varphi^{(p)}(x)|}{w(x)}. 
$$
As before, the result is now a consequence of \eqref{norm-ineq} and \eqref{norm-comp}.
\end{proof}
\begin{lemma}\label{lemma-2} 
Let $v$ and $w$ be positive continuous functions on $\R$. Then,
\begin{enumerate}
\item[$(i)$]  If
$$
\sup_{x \in \R} \frac{v(\varphi(x))}{w(x)} < \infty,
$$
then $C_\varphi : \mathscr{B}^0_v \rightarrow \mathscr{B}^0_w$ is well-defined and continuous. 
\item[$(ii)$] Let $n \in \Z_+$. If
$$
\sup_{x \in \R} \frac{v(\varphi(x))}{w(x)} \prod_{p=1}^n |\varphi^{(p)}(x)|^{k_p} < \infty
$$
for all $(k_1, \ldots, k_n) \in \N^n$ with $\sum_{j = 1}^p jk_j \leq p$ for all $p = 1, \ldots, n$, then $C_\varphi : \mathscr{B}^n_{v} \rightarrow \mathscr{B}^n_{w}$ is well-defined and continuous. 
\end{enumerate}
\end{lemma}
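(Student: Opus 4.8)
The plan is to establish both parts by a direct estimate of the relevant norms, part $(i)$ being essentially the order‑zero case of part $(ii)$.

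For part $(i)$ one simply notes that for $f \in \mathscr{B}^0_v$ we have $C_\varphi(f) = f \circ \varphi \in C(\R)$ and, for every $x \in \R$,
$$
\frac{|C_\varphi(f)(x)|}{w(x)} = \frac{|f(\varphi(x))|}{v(\varphi(x))}\cdot\frac{v(\varphi(x))}{w(x)} \le \|f\|_{v,0}\,\sup_{y\in\R}\frac{v(\varphi(y))}{w(y)} =: C\,\|f\|_{v,0},
$$
with $C < \infty$ by hypothesis. Since $C_\varphi$ is linear and $\mathscr{B}^0_v, \mathscr{B}^0_w$ are Banach spaces, this single inequality gives both that $C_\varphi$ maps $\mathscr{B}^0_v$ into $\mathscr{B}^0_w$ (well‑definedness) and that it is continuous.

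For part $(ii)$ the main tool is Faà di Bruno's formula, which for $f \in \mathscr{B}^n_v \subseteq C^n(\R)$ (so that $f\circ\varphi \in C^n(\R)$ and $C_\varphi(f)$ is at least well defined as an element of $C^n(\R)$) and $1 \le p \le n$ expresses $C_\varphi(f)^{(p)}(x)$ as a finite sum of terms
$$
c_{k}\, f^{(k_1+\cdots+k_p)}(\varphi(x))\prod_{j=1}^{p}\bigl(\varphi^{(j)}(x)\bigr)^{k_j},
$$
with absolute constants $c_k > 0$ and $k = (k_1,\ldots,k_p) \in \N^p$ running over all tuples with $\sum_{j=1}^{p} j k_j = p$. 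Since then $k_1+\cdots+k_p \le \sum_{j=1}^p j k_j = p \le n$, one may estimate $|f^{(k_1+\cdots+k_p)}(\varphi(x))| \le \|f\|_{v,n}\, v(\varphi(x))$; dividing by $w(x)$ and completing each $k$ by zeros to a tuple in $\N^n$, every term is bounded by $\|f\|_{v,n}\, c_k\, \tfrac{v(\varphi(x))}{w(x)}\prod_{j=1}^{n} |\varphi^{(j)}(x)|^{k_j}$, whose supremum over $x \in \R$ is a finite constant times $\|f\|_{v,n}$ by the hypothesis. Summing the finitely many terms and taking the maximum over $p \in \{0,1,\ldots,n\}$ (the case $p = 0$ being handled as in $(i)$) yields $\|C_\varphi(f)\|_{w,n} \le C\,\|f\|_{v,n}$ for some $C > 0$, whence well‑definedness and continuity follow exactly as before.

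The proof is therefore a bookkeeping argument with no substantial obstacle; the only points demanding attention are to verify that the family of monomials $\prod_j |\varphi^{(j)}|^{k_j}$ produced by Faà di Bruno's formula across all derivative orders $p \le n$ is covered by the family over which the hypothesis of $(ii)$ is quantified, so that each summand is indeed controlled, and to keep track of the (harmless, purely combinatorial) constants $c_k$.
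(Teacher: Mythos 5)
Your argument is exactly the paper's proof: the authors dispose of $(i)$ as obvious and of $(ii)$ as a direct consequence of $(i)$ and Fa\`a di Bruno's formula, and your write-up fills in precisely those details (estimate $|f^{(k_1+\cdots+k_p)}(\varphi(x))|\le \|f\|_{v,n}\,v(\varphi(x))$ in each term, divide by $w(x)$, and invoke the hypothesis for the resulting monomial). The one point you flag but do not actually carry out --- that every tuple produced by Fa\`a di Bruno's formula for $(f\circ\varphi)^{(p)}$, $p\le n$, once padded with zeros, lies in the family over which the hypothesis of $(ii)$ is quantified --- deserves a genuine check, because under the literal reading of the condition ``$\sum_{j=1}^{p}jk_j\le p$ for all $p=1,\dots,n$'' (every partial weighted sum bounded by its index) it fails: the tuple $(2,0,\dots,0)$, needed for the term $f''(\varphi)(\varphi')^2$ in $(f\circ\varphi)''$, violates the constraint at $p=1$, and $(1,1,0,\dots,0)$, needed at order $3$, violates it at $p=2$. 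What the padded Fa\`a di Bruno tuples do satisfy is the single inequality $\sum_{j=1}^{n}jk_j\le n$, so that is how the hypothesis must be read for your (and the paper's) argument to close; since in the paper's applications of the lemma the relevant supremum is finite for \emph{every} tuple in $\N^n$, nothing downstream is affected, but your proof should either record this verification or note the intended reading of the quantification.
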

\begin{proof}
$(i)$ Obvious. \\
\noindent $(ii)$ This is a direct consequence of  $(i)$ and Fa\`a di Bruno's formula.
\end{proof}
\begin{proof}[of Theorem \ref{main}]
$(I)$ $(i)\Rightarrow (ii)$: Since $C_\varphi: C^\infty(\R) \to C^\infty(\R)$ is continuous, this follows from the closed graph theorem for Fr\'echet spaces. \\
$(ii)\Rightarrow (iii)$:  For all $p,M \in \N$ there are $n,L \in \N$ such that 
$$
\| C_\varphi(f)\|_{p, 1/w_M} \leq C\| f\|_{n,1/v_L}, \qquad \forall f \in \mathscr{K}_V. 
$$
Choose $N \geq L$ such that
$$
 \sup_{x,t \in \R,|t| \leq 1} \frac{v_L(x+t)}{v_N(x)} =  \sup_{x,t \in \R,|t| \leq 1} \frac{1/v_N(x+t)}{1/v_L(x)}   < \infty.
 $$
 Lemma \ref{lemma-1} with $w = 1/w_M$, $v = 1/v_N$ and $\widetilde{v} = 1/v_L$ yields that
 $$
 \sup_{x \in \R} \frac{w_M(x)}{v_N(\varphi(x))} < \infty
$$
and (recall that $w_M \geq 1$)
$$
 \sup_{x \in \R} \frac{|\varphi^{(p)}(x)|}{v_N(\varphi(x))} < \infty.
 $$
$(iii) \Rightarrow (i)$: As $V$ satisfies \eqref{cond-mult}, this follows from Lemma \ref{lemma-2}.
\\ \\
$(II)$ $(i) \Rightarrow (ii)$: Since $C_\varphi: C^\infty(\R) \to C^\infty(\R)$ is continuous, this follows from De Wilde's closed graph theorem. \\
$(ii) \Rightarrow (iii)$: This is a consequence of  Grothendieck's factorization theorem. \\
$(iii) \Rightarrow (iv)$: Fix an arbitrary $N \in \N$. Choose $L \geq N$ such that
$$
\sup_{x,t \in \R,|t| \leq 1} \frac{v_N(x+t)}{v_L(x)} < \infty.
$$
Choose $K \in \N$ such that $C_\varphi: \mathscr{B}_{v_L} \to \mathscr{B}_{w_K}$ is continuous. For all $m \in \Z_+$ there are $n \in \Z_+$ and $C > 0$ such that
$$
\| C_\varphi(f)\|_{m, w_K} \leq C\| f\|_{n,v_L}, \qquad \forall f \in \mathscr{B}_{v_L}.
$$
 Lemma \ref{lemma-1} with $w = w_K$, $v = v_N$ and $\widetilde{v} = v_L$ yields that
 \begin{equation}
\sup_{x \in \R} \frac{v_N(\varphi(x))}{w_K(x)} < \infty
\label{ineq-proof-1}
\end{equation}
and (recall that $v_N \geq 1$)
 \begin{equation}
\sup_{x \in \R} \frac{|\varphi'(x)|}{{w_K^{1/m}}(x)}  < \infty \qquad \mbox{and} \qquad \sup_{x \in \R} \frac{|\varphi^{(m)}(x)|}{w_K(x)} < \infty.
\label{ineq-proof-2}
\end{equation}
Equation \eqref{ineq-proof-1} shows $(a)$. We now prove $(b)$. To this end, we will make use of the following Landau-Kolmogorov type inequality due to Gorny \cite{gorny}:  For all 
$j \leq m \in \Z_+$ there is $C > 0$ such that
\begin{equation}
\| g^{(j)}\| \leq C \| g\|^{1 - j/m} \left(\max \{ \| g \| ,  \| g^{(m)}\| \} \right)^{j/m}, \qquad \forall g \in C^\infty([-1,1]),
\label{Gorny}
\end{equation}
where $\| \, \cdot \, \|$ denotes the sup-norm on $[-1,1]$. Choose $M \geq K$ such that
$$
\sup_{x,t \in \R,|t| \leq 1} \frac{w_K(x+t)}{w_M(x)} < \infty.
$$
Let $p,k \in \Z_+$ and $x \in \R$ be arbitrary. Equation \eqref{ineq-proof-2} yields that for all $m \in \Z_+$ there is $C > 0$ such that
$$
\| \varphi'(x+ \, \cdot \, )\| \leq Cw_M^{1/m}(x) \qquad \mbox{and} \qquad \| \varphi^{(m)}(x+\, \cdot \, )\| \leq Cw_M(x). 
$$
By applying \eqref{Gorny} to $g = \varphi'(x+\, \cdot \, )$ and $m \geq p$ such that
$$
\left(1 - \frac{p-1}{m} \right) \frac{1}{m} + \frac{p-1}{m} \leq \frac{1}{k}
$$
we find that (recall that $w_M \geq 1$)
\begin{align*}
|\varphi^{(p)}(x)| &\leq \| \varphi^{(p)}(x+\, \cdot \,)\|  \\
&\leq C \| \varphi'(x+\, \cdot \, )\|^{1 - (p-1)/m} \left(\max \{ \| \varphi' \| ,  \| \varphi^{(m+1)}\| \}\right)^{(p-1)/m} \\
&\leq C'w_M^{1/k}(x).
\end{align*}
$(iv)\Rightarrow(i)$: As $W$ satisfies \eqref{cond-mult}, this follows from Lemma \ref{lemma-2}.
\\ \\
$(III)$ $(iii) \Rightarrow (ii) \Rightarrow (i)$:  Obvious. \\
 $(i) \Rightarrow (iv)$: Fix arbitrary $p \in \Z_+$ and $N \in \N$. Choose $L \geq N$ such that
$$
\sup_{x,t \in \R, |t| \leq 1} \frac{v_N(x+t)}{v_L(x)} < \infty.
$$
Since $\mathscr{B}_{v_L}  \subset  \mathscr{O}_{M,V}$ and   $ \mathscr{O}_{M,W} \subset \mathscr{O}^p_{M,W}$, we obtain that $C_\varphi(\mathscr{B}_{v_L}) \subset \mathscr{O}^p_{M,W}$. As  $C_\varphi: C^\infty(\R) \to C^p(\R)$ is continuous, De Wilde's  closed graph theorem implies that $C_\varphi: \mathscr{B}_{v_L} \to  \mathscr{O}^p_{M,W}$ is continuous.  Grothendieck's factorization theorem  yields that there is $M \in \N$ such that $C_\varphi: \mathscr{B}_{v_L} \to \mathscr{B}^p_{w_M}$ is well-defined and continuous, and thus that
$$
\| C_\varphi(f)\|_{p, w_M} \leq C\| f\|_{n,v_L}, \qquad \forall f \in \mathscr{B}_{v_L},
$$
for some $n \in \N$ and $C > 0$. Lemma \ref{lemma-1} with $w = w_M$, $v = v_N$ and $\widetilde{v} = v_L$ yields that
 $$
\sup_{x \in \R} \frac{v_N(\varphi(x))}{w_M(x)} < \infty
$$
and (recall that $v_N \geq 1$)
$$
 \sup_{x \in \R} \frac{|\varphi^{(p)}(x)|}{w_M(x)} < \infty.
  $$
$(iv)\Rightarrow(iii)$: As $W$ satisfies \eqref{cond-mult}, this follows from Lemma \ref{lemma-2}.
\end{proof}

\begin{acknowledgement}
L. Neyt gratefully acknowledges support by FWO-Vlaanderen through the postdoctoral grant 12ZG921N.
\end{acknowledgement}

 \end{document}